\def\R{\mathbb{R}}
\def\Z{\mathbb{Z}}
\def\Eff{\mathrm{Eff}}
\newtheorem{theorem}{Theorem}[section]
\newtheorem{lemma}[theorem]{Lemma}
\newtheorem{definition}[theorem]{Definition}
\newtheorem{proposition}[theorem]{Proposition}
\newtheorem{example}[theorem]{Example}
\let\origmaketitle\maketitle
\def\maketitle{
  \begingroup
  \def\uppercasenonmath##1{} % this disables uppercasing title
  \let\MakeUppercase\relax % this disables uppercasing authors
  \origmaketitle
  \endgroup
}
\begin{document}
\title[An improved algorithm to compute the $\omega$-primality]{\large An improved algorithm to compute the $\omega$-primality}

\author[Wissem Achour, Djamal Chaabane \MakeLowercase{and} V\'ictor Blanco]{{\large Wissem Achour$^\dagger$, Djamal Chaabane$^\dagger$ and  V\'ictor Blanco$^\ddagger$}\medskip\\
$^\dagger$Dept. of Operations Research,  USTHB\\
$^\ddagger$IEMath-GR, Universidad de Granada\\
}

\address{Dept. of Operations Research, University of Science and Technology Houari Boumediene, ALGERIA.}
\email{wissem\_achour16@yahoo.com; dchaabane@usthb.dz}

\address{IEMath-GR, Universidad de Granada, SPAIN.}
\email{vblanco@ugr.es}

\date{\today}

%\maketitle

\begin{abstract}
In this paper, we present an improved methodology to compute $\omega$-invariant of numerical semigroup. The approach is based on adapting a recent  resolution method for optimizing a linear  function  over the set of efficient solutions of a multiple objective integer linear programming problem.  The numerical experiments show the efficiency of the proposed technique compared to the existing methods.
\end{abstract}

\keywords{Global optimization; Optimization over an efficient set; Multi-objective Integer Programming; Numerical semigroups.}
%\subjclass[2010]{90B80, 90C11, 05C05}

\maketitle

\section{Introduction}
\label{intro}
The problem of optimizing a function over the set of efficient solutions of a multiobjective optimization problem has attracted the attention of researchers in the last decades due to the wide range of applications of this kind of problems. Needless to say that most (if not all) of the real-world problems, in different fields (manufacturing, logistic, finance, ...), have sometimes to deal with several conflicting goals. Several techniques have been designed to solve the different types of problems under this framework (see e.g., \cite{ehrgott,gandibleux,steuer}). However, in general, the solution set of these problems is large, which is not reasonable from the decision maker viewpoint. Hence, optimizing a function over the set of solutions of a multiobjective function becomes a very useful tool to select the ``best'' solution among that set (see \cite{shigeno} for a recent real-world application of this problem). But if multiobjective optimization is difficult itself, optimizing over the unknown set of efficient solution is a challenge in the mathematical programming community.

The continuous linear case have been widely studied (see \cite{philip,benson,ecker1,sayin,yamamoto}, among others). However, there is a lack of literature analyzing the optimization of a linear function over the efficient set of a multiobjective discrete problem, and relatively few algorithms have been proposed to solve them (see\cite{abbas,mebrek,jorge,boland}).   

As mentioned above, there are many applications of optimizing a function over the set of efficient solutions of a multiobjective problem. In this paper we analyze one of them arising in a more abstract field: the Algebraic Theory of Numbers. In particular, we study an algebraic invariant that appears when trying to identify the structure of a Numerical Semigroup. A Numerical Semigroup is a subset of the set of nonnegative integer numbers, which represent all the nonnegative integer combinations of certain set of integers. The study of these objects allows to understand the insights of some interesting Diophantine Equations, which are closely related with knapsack polytopes. We study here the $\omega$-invariant of a Numerical Semigroup, an invariant proposed in \cite{geroldinger}, and that has paid a lot of attention in the last years from researchers form Computational Algebra (see\cite{halter}\cite{blanco}\cite{omidali}, among others). 

In \cite{bvictor}, the author formulates the problem of computing the $\omega$-invariant as a problem of optimizing a linear function over the set of efficient solutions of a discrete multiobjective problem. Hence, a methodology based on \cite{jorge} was applied to solve the problems. The main advantage of this approach with respect to those proposed before is that since it uses mathematical programming tools it allows one to avoid the complete enumeration of the discrete feasible set.

In this paper, we propose a different strategy for solving the problem in which we exploit the structure of the problem by computing new bounds for the problem that allows to strengthen the search of the solution. This simple idea allows us to speed up the method and solve the problem in smaller times that those consumed by the one in \cite{bvictor}.

The paper is organized in five sections. In Section \ref{notation}, we briefly recall some preliminary notions on Multiobjective Optimization and Numerical Semigroups, needed for the development in the rest of the paper. Section \ref{description} is devoted to describe our approach for computing the $\omega$-invariant.  In Section \ref{num-exp} we report the results of some computational experiments. Finally, in Section \ref{conclusion} we give some concluding remarks and perspectives.

\section {Preliminaries}\label{notation}

\subsection{Optimization over the Efficient set}
We are given a finite set of linear objective functions induced by the $p$ vectors $c_1, \ldots, c_p \in \R^n$, and a polyhedral feasible set, $P$, defined by means of linear equations $P=\{x\in \R_+^n: Ax = b\}$, where $A\in\R^{m\times n}$ and $b \in \R^m$. A multiobjective linear integer programming problem  is written as:
\begin{align}
\max & \;\; C x := (c_1(x), \ldots, c_p(x))\label{molip}\tag{${\rm MOLIP}$}\\
\mbox{s.t. }& x\in P \cap \Z_+^n. \nonumber
\end{align}

Solving \eqref{molip} means finding the set of the so-called \textit{efficient} solutions. A feasible solution $x^*\in P \cap \Z_+^n$ is said efficient solution of $\eqref{molip}$ if and only if there is no feasible solution $x \in D $ such that $C x \geq C x^{*}$ with  $C x \neq C x^{*}$. Otherwise,  $x^{*}$ is not efficient and $C x^{*}$ is said to be dominated by $C x$. The set of efficient solutions for the problem above will be denoted by $\Eff(C,P)$.
Despite finding the whole set of efficient solutions is hard since the problem is \#P-hard~\cite{EG02}, one can check whether a given feasible solution $\bar x \in P \cap \Z^n_+$ is an efficient solution or not solving an  integer linear programming problem. Ecker and Kouada \cite{ecker} stated that one can solve the following problem:
\begin{align}
\Psi(\bar x) = \max & \;\;\sum\limits_{i=1}^{p}\psi_{i}\nonumber\\
\mbox{s.t. }& c_j x+\psi_j= c_j \tilde{x}, \forall j=1, \ldots, p,\label{EK}\tag{$\rm{EK(}\tilde{x}\rm{)}$}\\
&x \in P \cap \Z_+^n, \nonumber\\
&\psi_{j} \geqslant  0, \forall j=1, \ldots, p.\nonumber
\end{align}
Then, $\bar {x}$ is an efficient solution if and only if $\Psi(\bar x) = 0$. Furthermore, in case $\bar{x}$ is not efficient, the optimal $x$-variables of \eqref{EK} provide an efficient solution dominating $\bar{x}$.
In this paper, we analyze beyond multiobjective optimization, since we analyze the problem of optimizing a linear function over the set of efficient solutions of a multiobjective linear integer programming problem. The problem can be formulated as follows:
\begin{align}
\max  & \; \phi(x) := d x\nonumber\\
\mbox{s.t. }&x\in \Eff(C,P). \label{Main}\tag{${\rm OES}$}
\end{align}
where $d =(d_1, \ldots, d_n) \in \R^n$.

A possible relaxation of the problem above consists of relaxing the requirements of efficiency for $x$:
\begin{align}
\max ,  & \;\phi(x) \\
\mbox{s.t. }& x\in P \cap \Z^n_+. \label{OESR}\tag{${\rm OES}_R$}
\end{align}

\subsection{The $\omega$-invariant}

In what follows we describe the problem analyzed in this paper. It is defined over the algebraic structure of numerical semigroups. Hence, we first describe the basic elements under this theory and the notions needed to state the problem of computing the $\omega$-primality of a numerical semigroup. 

$S\subseteq \Z_+$ is said a numerical semigroup if: (1) $0 \in S$; (2) $\Z_+ \backslash S$ is finite; and (3) $S$ is closed for the sum ($x + y \in S$ for all $x, y \in S$). A set $\{n_{1},\ldots, n_{p}\} \subset S$ with $\gcd = (n_{1}, \ldots, n_{p})= 1$ is system of generators of $S$ if $S =\left\{s \in \Z_+: s = x_{1}n_{1} + ...+ x_{p} n_{p}, x_1, \ldots, x_p \in \Z_+ \right\}$. The set $\{n_{1}, \ldots, n_{p}\}$ is a minimal system of generators if no proper subset of it generates $S$. It is not difficult to see that every numerical semigroup has a unique minimal generating system \cite{rosales}. If $\{n_1, \ldots, n_p\}$ is a minimal system of generators of $S$, we denote $S=\langle n_1, \ldots, n_p\rangle$.

One interesting question when analyzing numerical semigroups is the study of decompositions of the elements of the semigroup with respect to the minimal generating system, that is, its set of factorizations. Let $S=\langle n_1, \ldots, n_p\rangle$ be a numerical semigroup, and $s\in S$. The possible factorizations of $s$ are the different ways of obtaining $s$ as an integer linear combination of the minimal system of generators, i.e., the solutions of the following system of diophantine equations:
\begin{align*}
s= \sum_{i=1}^p n_i x_i,
\end{align*}
with $x_1, \ldots, x_p \in \Z_+$.

The solutions of the above system is denoted by $F(s)$, the set of factorizations of $s$ in $S$.

In the analysis of factorization, Gerolding and Hassler \cite{gerol,geroldin} introduced the notion of $\omega$-primality which allows to measure how far is an element of the semigroup of  being uniquely factorized. A lot of attention has been paid in the recent years to the computation and analysis of such an algebraic invariant (see~\cite{anderson,ander,gero,you}, among others). Observe this measures is closely related with some integer programming models, since unique factorizations will provide single feasible solutions to knapsack problems.

\begin{definition}[Omega Invariant]
 Let $S = \langle n_{1},\cdots, n_{p}\rangle$ be a numerical semigroup, and $s \in S$. The $\omega$-invariant of $s$ in $S$, $\omega(S, s)$, is the smallest nonnegative integer $K$, such that
for each finite set of elements $\{s_1, \ldots, s_n\} \subseteq S$ that verifies $\sum_{i \in S_K} s_i - s \in S$, there exists $\Omega \subseteq \{1,\ldots,n\}$ with cardinality $K$ and such that $\sum_{i \in \Omega} s_i - s \in S$. The global $\omega$-invariant of the semigrouo $S$ is defined as $\omega(S)= \:\max ,  \{ \omega(S, n_{i})\: : \:  i= 1,\cdots, p \}$.
\end{definition}
With such an invariant, $s$ is prime, i.e. uniquely factorized if $\omega(S,s)=1$. Otherwise, if $\omega(S,s)>1$, then $\omega(S)>1$ and the semigroup is said $\omega(S)$-prime. In \cite{blanco}, is is proved that:
$$
\omega(S, s) = \max \left\{ \displaystyle \sum_{i=1}^{p} x_{i} \: : \: x \in  {\rm Minimals} \{F(s+s'): s' \in S\} \right\}.
$$
(here, ${\rm Minimals}\{Q\}$ stands for the set of component-wise minimal elements of $Q \subset\R^p$.)

This equation states that one can compute the $\omega$-invariant by searching among the set of minimal elements of certain set. 

The interesting point here is the connection between the $\omega$-invariant and mathematical programming. In particular, from an algebraic result in \cite{blanco}, in  \cite{bvictor} the author formulates the problem as a problem with the shape of \eqref{Main}, i.e. of optimizing a linear function over the efficient set of a multiobjective linear integer programming problem:
\begin{lemma}[\cite{bvictor}]
Let $S=\langle n_1, \ldots, n_p\rangle$ and $s \in S$. Then, 
  \begin{align} \omega(S,n_j)= \max & \sum_{i=1}^n x_i\\
\mbox{s.t. }& (x, y) \in \Eff(C,P)
\end{align}
where $C=(Id_{p},0_p)$, and $P=\{x\in \R_+^n: Ax = b\}$ with $A=(n_1,\ldots,n_p,-n_1,\ldots,-n_p)$ and $b=n_j$ (here, $Id_p$ stands for the $p\times p$ identity matrix and $0_p$ is the $p\times p$ matrix with all its entries zero.
\end{lemma}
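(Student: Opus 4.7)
The plan is to directly identify the efficient set of the MOLIP in the statement with the set $\mathrm{Minimals}\{F(n_j+s'):s'\in S\}$ appearing in the formula for $\omega(S,n_j)$ recalled above, and then invoke that formula. Since the MOLIP here has variables split as $(x_1,\ldots,x_p,y_1,\ldots,y_p)\in\Z_+^{2p}$, a single linear equation $Ax=b$, and an objective matrix $C=(Id_p,0_p)$ that depends only on the $x$-block, the whole correspondence reduces to understanding how the $y$-block parameterises the shift $s'\in S$.

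The first step is to decode the feasibility constraint. Writing $Ax=b$ out gives $\sum_{i=1}^p n_i x_i - \sum_{i=1}^p n_i y_i = n_j$, so a point $(x,y)\in\Z_+^{2p}$ is feasible if and only if $\sum_i n_i x_i = n_j + s'$, where $s' := \sum_i n_i y_i\in S$. Conversely, every $s'\in S$ can be realised as $\sum_i n_i y_i$ for some $y\in\Z_+^p$, so the projection of $P\cap\Z_+^{2p}$ onto its $x$-coordinates equals $\bigcup_{s'\in S}F(n_j+s')$.

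The second step is to identify efficient $(x,y)$ with componentwise minimal factorisations. Because $C(x,y)=x$, two feasible points sharing the same $x$-block are indistinguishable from the MOLIP's viewpoint; hence $(x,y)$ is efficient iff $x$ is an extremal element (in the relevant Pareto sense) of $\bigcup_{s'\in S}F(n_j+s')$. Since that union is unbounded from above in $x$ (one can always inflate $y_i$ to accommodate a larger $x_i$), the only meaningful notion of extremality is componentwise minimality in $x$. Combining the two steps yields
$$
\{x:\exists y,\;(x,y)\in\Eff(C,P)\}=\mathrm{Minimals}\{F(n_j+s'):s'\in S\},
$$
and maximising $\sum_i x_i$ over this set is precisely $\omega(S,n_j)$ by the formula of \cite{blanco} recalled in the text.

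The main obstacle is the sign/convention subtlety in Step 2: with $C=(Id_p,0_p)$ read under the literal max-Pareto definition given in \eqref{molip}, the efficient set would be empty because $x$ is unbounded on the feasible region. One therefore has to justify carefully that ``efficient'' in this particular instance must be interpreted as componentwise minimal in the $x$-block (equivalently, the problem is implicitly rewritten by replacing $C$ with $-C$ or $\max$ with $\min$ on the $x$-objectives). Once that convention is pinned down, the rest of the argument is a transparent dictionary between the algebraic description of the $\omega$-invariant and the geometry of the feasible polyhedron of the MOLIP.
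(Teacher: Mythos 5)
The paper itself offers no proof of this lemma: it is imported verbatim from \cite{bvictor}, and the only ingredient the paper supplies is the characterization $\omega(S,s)=\max\{\sum_i x_i : x\in\mathrm{Minimals}\{F(s+s'):s'\in S\}\}$ from \cite{blanco}. Your argument is correct and is essentially the intended derivation: the projection of the feasible set onto the $x$-block is $\bigcup_{s'\in S}F(n_j+s')$ (your Step 1 is exactly right, including the converse direction via a factorization $y$ of $s'$), and since $C(x,y)=x$, a feasible point is efficient precisely when its $x$-block is componentwise minimal in that union, whose minimal elements are finite in number by Dickson's lemma, so the maximum of $\sum_i x_i$ over them is attained and equals $\omega(S,n_j)$ by the \cite{blanco} formula. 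You are also right to flag the convention issue, and it is a genuine defect of the transcription rather than of your proof: under the literal max-Pareto definition of \eqref{molip} given in the paper, every feasible $(x,y)$ is dominated by $(x+e_i,y+e_i)$, so $\Eff(C,P)$ would be empty and the stated identity vacuous; efficiency here must be read as Pareto-minimality of the $x$-variables (equivalently, objective matrix $(-Id_p,0_p)$), which is how the original source formulates it. The only cosmetic improvement would be to state that resolution as a correction of the statement (and of the index $n$ versus $p$ in the objective) rather than as an appeal to ``the only meaningful notion of extremality,'' but the mathematical content is complete.
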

The aim of this paper is to efficiently compute the $\omega$-invariant of a numerical semigroup.

\section{A solution scheme for the problem}\label{description}

\noindent In this section we propose a new mathematical programming approach for computing the $\omega$-invariant of a numerical semigroup. We show that our new algorithm provides the  $\omega(S)$ without an explicit enumeration of all efficient factorizations in a finite number of iterations. In \cite{bvictor} the author proposed the first mathematical optimization model and resolution approach for the problem. However, a major drawback of the  resolution presented in \cite{bvictor} is that it does not fully exploit the specific nature of the problem. Here, we propose an improved algorithm that incorporates information about the algebraic structure of the problem.

Let  $S= \langle n_{1},...,n_{p} \rangle$ be a numerical semigroup. The initial step consists of generating a subset of $p-1$ efficient solutions for the multiobjective problem, which is based on the following result whose proof is straightforward.

\begin{lemma}
Let $j\in\{1, \ldots,p\}$. Then, for $k \in \{1,\ldots,p\}\backslash\{j\}$, the solution of the following integer programming problem: 
\begin{align}
\overline x_k^j = \min & \;\; x_{k}\nonumber\\
\mbox{s.t. } & n_{k} x_{k} -\displaystyle \sum_{i=1}^{p} n_{i}y_{i}= n_{j}\label{p0}\tag{${\rm UBound}_{jk}$}\\
&x_k \in \Z_+,  y \in \Z_+^{p}\nonumber
\end{align}
is in $\Eff(C,P)$.
\end{lemma}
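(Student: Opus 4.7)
The plan is to take an optimal solution $(\overline{x}_k^j, y^{\ast})$ of \eqref{p0} and lift it to a point $(\hat x, y^{\ast}) \in \Z_+^{2p}$ of \eqref{molip} by placing $\overline{x}_k^j$ in the $k$-th coordinate of the $x$-block and zeros in the remaining $x$-coordinates, then verify that this lifted point lies in $\Eff(C,P)$. Feasibility is automatic: the constraint of \eqref{p0} is precisely $A(\hat x, y^{\ast}) = b$ once the padded zeros are inserted, so $(\hat x, y^{\ast}) \in P \cap \Z_+^{2p}$.

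The heart of the argument is the efficiency step, which I would carry out by contradiction. Suppose $(x', y')$ is a feasible point of \eqref{molip} that dominates $(\hat x, y^{\ast})$ in the Pareto sense relevant to the $\omega$-formulation (componentwise minimality of the $x$-block, consistent with the Minimals characterization of $\omega(S, n_j)$ from the Blanco identity). Because $\hat x$ is supported only on coordinate $k$, the inequalities $x_i' \leq \hat x_i = 0$ for $i \neq k$ force $x_i' = 0$ whenever $i \neq k$, so the dominance collapses to $x_k' \leq \overline{x}_k^j$ with $x' \neq \hat x$, i.e.\ $x_k' < \overline{x}_k^j$. But then $(x_k', y')$ is a feasible point for \eqref{p0} with a strictly smaller value of the objective $x_k$, contradicting the optimality of $\overline{x}_k^j$.

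The only real subtlety is bookkeeping the direction of dominance: the generic MOLIP in Section~\ref{notation} is written as a maximization, but the $\omega$-invariant picks out minimal factorizations, so efficiency here must be understood as componentwise minimality of the $x$-part (otherwise the candidate point would be trivially dominated by $\hat x + e_k$). Once that convention is fixed the argument is essentially the one-line IP-optimality reduction above, which matches the authors' claim that the proof is straightforward.
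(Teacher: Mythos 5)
Your argument is correct and is essentially the ``straightforward'' proof the paper alludes to but never writes out: pad the optimal solution of \eqref{p0} with zeros, note feasibility in $P$, and observe that any point dominating it would have to vanish off coordinate $k$ and hence give a strictly smaller feasible objective value for \eqref{p0}, a contradiction. You also correctly flag (and resolve) the one genuine subtlety the paper glosses over, namely that efficiency here must be taken in the componentwise-minimality sense for the $x$-block, since under the maximization convention of the generic \eqref{molip} the lifted point would be dominated by adding $e_k$ to both blocks and the efficient set would in fact be empty.
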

The above result allows us to provide an initial lower bound for $\omega(S,n_j)$: $UB_j := \max_{k\neq j},   \left\{ \sum_{i=1}^{p}\overline{x}_{i}^{k}  \right\}$, with associated efficient solution that we denote by $\underline{x}^j$. Consequently, we have also an upper bound for $\omega(S)$, $UB=\displaystyle\max_{j=1,\ldots,p} UB_j$.

Next, in order to find another solution not dominated by $ \underline{x}^{j}$ we use the Sylva and Crema's cut  \cite{sylva2004,sylva2007}, which consists on solving the following integer linear programming problem:
\begin{align}
\max& \sum_{i=1}^{p} x_{i} \label{psj}\tag{${\rm PS}(\overline{x}_{j})$}\\
\mbox{s.t. } & x_{i} \leq k_{i}(\underline{x}_{i}^{j}-1)- M_{i}(k_{i}-1)&i=1, \ldots, p,\label{c1}\\
&\displaystyle \sum_{i=1}^{p}n_{i}x_{i} -\displaystyle \sum_{i=1}^{p} n_{i} y_{i} = n_{j},\nonumber\\
%&x_{i} \leq ub_{i}, i\neq j,\nonumber\\
&x_{j}= 0,\nonumber\\
&\sum_{i=1}^{p} k_{i} \geq 1,\label{c2}\\
&x, y \in \Z_+^{p}, k \in \{0,1\}^{p}.\nonumber
\end{align}
\noindent where $M_{i}$ is an upper bound for the value of the $x_i$ variable in the problem, for $i=1, \ldots, p$, which was obtained when solving \eqref{p0} for $k=i$. 

Observe that by \eqref{c1} the binary $k$-variables indicate whether in the new solution the value of $x_i$ strictly increases (being $k_i=1$) or not (being $k_i=0$). Hence, by constraint \eqref{c2}, the obtained solution at least increases the value of one of the values, being then non dominated by $\underline{x}^j$. The solution of \eqref{psj} provides a lower bound for $\omega(S,n_j)$, that will be denoted by $LB_j$.
Once a solution is obtained, one can solve \eqref{EK} to check whether the new solution is efficient, and otherwise it provides an efficient solution dominating it. Then, the process is repeated to generate different efficient solutions for the problem. To assure that an already efficient solution is again generated, at the $\ell$-th iteration we incorporate to \eqref{psj}, the following sets of constraints, instead of \eqref{c1}:
\begin{equation}\label{newc}
x_{i} \leq k_{i}^{s}(\widehat{x}_{i}^{r}-1)- M_{i}(k_{i}^{r}-1), \forall i=1,\ldots, p, r=1,...\ell, 
\end{equation}
and require that $k_i^s \in \{0,1\}$, for $i=1,\ldots, p, r=1,...\ell$, where $x^1, \ldots, x^\ell \in \Z_+^p$ are the efficient solutions generated up to this iteration. Running the above problem allows us to update the lower bound if the obtained objective function is greater.

At this point, different situations are possible after solving \eqref{psj} at a certain iteration of the procedure (with the extra constraints \eqref{newc}): (1) A new solution is obtained and  $LB_j<UB_j$. In that case, a new iteration is performed; (2) \eqref{psj} with the extra constraints \eqref{newc} is infeasible. In such a case, the complete exploration have been performed, and the best solution found up that iteration is the optimal one; and (3)  $LB_j=UB_j$. In that case, since no improvement can be done on the objective function, the solution has been reached and $\omega(S,n_j)$ coincides with the computed bounds.

We show in Algorithm \ref{alg} a pseudo-code summarizing the different steps of our approach.
\begin{algorithm}[htp]
\normalsize
$\star$ \textbf{Input Data}: $S=\langle n_{1},\cdots ,n_{p}\rangle$ and $j \in \{1,\ldots,p\}$, $LB_j=0$, $\ell=1$.

$\star$  \textbf{Computation of Initial Lower Bound.}\\
Solve \eqref{p0} for $k=1, \ldots, p$ ($k\neq j$): $UB_j$ and $\overline{x}^j$.

$\star$ \textbf{Iterations:}

Solve \eqref{psj} (for $\ell>1$ with extra constraints \eqref{newc}).\\
 \If{Feasible and $ \sum_{i=1}^{p}\overline{x}^{\ell}_{i} <  UB_j $ (Situation A)}
  {
  $UB_j =\sum_{i=1}^{p}\overline{x}^{\ell}_{i}$,
     $x^{\ell+1}= \overline{x}^{\ell}$,\\
     $\ell = \ell+1$,\\
     $LB_j = \max\{ LB_j, \sum_{i=1}^{p} x_{i}^{\ell}\}$.\\
     \textbf{Solve \eqref{EK} and repeat.}
  }
  \If{\eqref{psj} is unfeasible  (Situation B) or $LB_j \geq \sum_{i=1}^{p}\overline{x}^{\ell}_{i}$ (Situation C)}
 {\textbf{Stop.}}

$\star$ \textbf{Output}: $UB_j=\omega(S,n_j)$\\
\caption{Finding the omega invariant of numerical semigroup.\label{alg}}
\end{algorithm}

\begin{proposition}
Algorithm \ref{alg} converges in a finite number of iterations.
\end{proposition}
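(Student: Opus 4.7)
The plan is to prove finite convergence by combining two facts: first, the set of candidate efficient solutions the algorithm can visit is inherently finite; second, the Sylva--Crema cuts in \eqref{newc} force each iterate to be distinct from every solution generated in earlier passes. Together these imply that the iteration counter $\ell$ cannot grow indefinitely, so one of the stopping conditions must eventually trigger.

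For the first fact, I would argue that in the multiobjective model derived for $\omega(S,n_j)$ with objective matrix $C=(Id_p,0_p)$, only the $x$-projection of a feasible point carries Pareto-relevant information, since the $y$-coordinates contribute zero to every objective. The upper bounds $M_i$ obtained from \eqref{p0} are componentwise valid bounds on any $x$-vector that can appear as part of an efficient solution visited during the execution of the algorithm. Consequently, the distinct $x$-projections of efficient solutions form a finite subset of the box $\prod_{i=1}^{p}\{0,1,\ldots,M_i\}$, and in particular there are only finitely many distinct values of the objective $\sum_i x_i$ that the algorithm can ever record.

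For the second fact, I would verify by direct inspection of the big-$M$ formulation \eqref{newc} that, for every previously recorded efficient solution $x^r$ with $r=1,\ldots,\ell$, the feasible region of the augmented problem \eqref{psj} excludes $x^r$ itself. Indeed, the analogue of \eqref{c2} requires $\sum_i k_i^r \geq 1$, which in combination with the coupling constraints in \eqref{newc} enforces a strict difference in at least one coordinate relative to $x^r$. If the raw solution returned by \eqref{psj} is not efficient, the subsequent call to \eqref{EK} replaces it by an efficient solution dominating it, and this dominator cannot coincide with any $x^r$ either, since the $x$-projection of each $x^r$ has already been forbidden. Hence every non-terminating iteration genuinely enlarges the finite set of visited efficient solutions.

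Putting the two parts together, as long as the algorithm does not terminate, each iteration adds a new element to a finite set, so only finitely many iterations can occur before either the feasibility check of \eqref{psj} fails (Situation B) or the bounds meet (Situation C). The main obstacle is to verify carefully that the Sylva--Crema cuts really eliminate each $x^r$ as a feasible point of the augmented \eqref{psj}, not just a single coordinate configuration; this requires a short case analysis on the binary $k_i^r$ variables, checking that no combination with $\sum_i k_i^r \geq 1$ admits $x = x^r$ as a solution.
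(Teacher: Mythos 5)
Your proposal is correct and follows essentially the same route as the paper: a finite admissible region (integer variables confined to the box given by the bounds $M_i$) together with the observation that the cuts \eqref{newc} strictly shrink the feasible region by excluding every previously generated solution, so the procedure must reach Situation B or C after finitely many iterations; your write-up is in fact more explicit than the paper about why the Sylva--Crema cuts (and the subsequent \eqref{EK} step) cannot reproduce an already visited efficient solution. The only difference is that the paper's proof appends a short additional argument that the value returned at termination equals $\omega(S,n_j)$, which your proposal does not address, but for the finiteness statement as written your argument matches the paper's.
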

\begin{proof}
Since the cardinality of the admissible region $\overline{D}$ is finite (the decision variables are integers and bounded) and at each iteration, the current  domain $\widetilde{D}$ is being gradually  reduced with $|\widetilde{D}|< |\overline{D}|$ until it  becomes  empty $(|\widetilde{D}|=0)$. This indicates that the method process terminates in a finite number of iterations.

Let now analyze the convergence. We suppose the existence of a feasible solution $\overline{x} \in EF(P(n_j))$; $\overline{\omega}(S,n_j)= \sum_{i=1}^{p}\overline{x}_i$ such that $\overline{\omega}(S,n_j)>  \omega^{\star}(S,n_j)$. At the terminal stage of the above algorithm, say iteration $k$, we obtain\\
        $\omega_{inf} =\omega^{\star}(S,n_j) =\sum_{i=1}^{p}x^{\star}_{i} =\sum_{i=1}^{p}x^{k}_{i} \geq  \sum_{i=1}^{p}x^{\ell}_{i}, ,  \forall \ell \in \{1, \cdots, k-1\} $.\\
        Therefore, $\overline{\omega}(S,n_j)>  \omega^{\star}(S,n_j) \geq \omega^{\ell}(S,n_j),,  \forall \ell \in \{1, \cdots,k-1\}$;
        particularly, $\overline{\omega}(S,n_j)>  \omega^{\star}(S,n_j) \geq \overline{\omega}(S,n_j)$. Therefore, $\omega^{\star}(S,n_j)=\overline{\omega}(S,n_j)$.
\end{proof}
\begin{example}\label{ex}
Let us consider the numerical semigroup $S=\langle 6,10,14,27\rangle$, and let us compute $\omega(S,27)$. First, solving \eqref{p0} for $k=1,2,3$, we get that $LB_4=9$ with $x^0=(9,0,0,0)$. Then, we solve \eqref{psj} to get a solution not dominated by $x^0$, obtaining $x^{1}= (8,6,5,0)'$ and $y^{1}= (1,1,0,5)$, with an optimal value $UB_4= 19$. Since $UB_4> LB_4$, we solve \eqref{EK} to get an efficient solution which dominates $x^1$. We obtain $\widehat{x}^{1}=(0,4,1,0)$ and $\widehat{y}^{1}=(0,0,0,1)$,  with an optimal value of $5<9$, so the lower bound is not updated. The rest of the iterations are summarized in Table \ref{table:ex}, where one can observe that after $8$ iterations one get that $LB_4=UB_4=10$, being then $\omega(S,27)=10$.
\ref{table:ex}.
\begin{table}[h]
\caption{Iterations of the algorithm to compute $\omega(S,27)$ of Example \ref{ex}.}
\label{table:ex}
\begin{center}
\normalsize\begin{tabular}{|ccccc|}\hline
It & \eqref{psj} &  \eqref{EK} & $LB_4$ & $UB_4$ \\\hline
1 & $x^{1}=( 8, 6 ,  5,  0)$&  $\hat{x}^{1}=( 0,  4 ,  1,  0)$& 9&19\\
2& $x^{2}=( 8, 3 ,  5,  0)$&  $\hat{x}^{2}=( 2 ,  0 ,  3,  0)$& 9&16\\
3& $x^{3}=( 8, 6 ,  0,  0)$&  $\hat{x}^{3}=( 0 ,  6 ,  0,  0)$& 9 &14 \\
4& $x^{4}=( 8, 5 ,  0,  0)$&  $\hat{x}^{4}=( 4 ,  3 ,  0,  0)$& 9 &13  \\
5&$x^{5}=( 8, 2 ,  2,  0)$&   $\hat{x}^{5}=( 1 ,  2 ,  2,  0)$ & 9 &12 \\
6& $x^{6}=( 8, 1 ,  2,  0)$&  $\hat{x}^{6}=( 5 ,  1 ,  1,  0)$& 9 &11 \\
7& $x^{7}=( 8, 0 ,  2,  0)$&  $\hat{x}^{7}=( 6 ,  0 ,  2,  0)$& 9  &10\\
8& $x^{8}=( 8, 2 ,  0,  0)$& $\hat{x}^{8}=( 8 ,  2 ,  0,  0)$&10&10\\
\hline
\end{tabular}
\end{center}
\end{table}
\end{example}
\section{Numerical experiments}\label{num-exp}
We have run a series of experiments in order to test the efficiency of the proposed method as well as to compare with the approach proposed by Blanco in \cite{bvictor}.

We generate a battery of random instances of different sizes. We use GAP package ``numericalsgps'' \cite{numericalsgps} 
to generate  numerical semigroups with embedding dimension $p$, with $p$ ranging in $\{ 5,7,10,12,15,17\}$ with integer generators with values running in $[100,2000]$ (calling the functions {\small \texttt{ RandomNumericalSemigroup}} and {\small\texttt{MinimalGeneratingSystemOfNumericalSemigroup}}).

We denote an instance by $(p, S(p))$, where $S(p)= \{n_1,\cdots, n_p\}$. A couple $(p, ,  S(p))$ produces a multiple objective instance  $ (p, m,n)$,
where  $p$ is the number of objectives, $m=p+1 $ the number of constraints and $n=2p$  the number of variables. CPLEX solver within MATLAB environment is used to solve these latter. For each instance $(p,S(p))$,  $10$ problems are to be solved.
\noindent The method described in section \ref{description} and the one presented in \cite{bvictor} ( referred in table \ref{compute} as Algorithm 1 and Algorithm Blanco respectively)
 were implemented  in a MATLAB environment and run on a personal computer, Intel (R) Core (TM) i5 CPU  2.5 GHz.  CPLEX 12.6  solver is also used  to solve linear  and  integer linear programming problems.

The average CPU times obtained with both approaches are reported in Table \ref{compute}. We report the minimum, maximum and median CPU times when running the $10$ instances for each $p$.

\begin{center}
\begin{table}[h]
\normalsize\centering\begin{tabular}{|l|ll|ll|}\hline
\multirow{2}{*}{$S(p)$} & \multicolumn{2}{c}{Algorithm \ref{alg} CPUTime (s)} &  \multicolumn{2}{|c|}{Algorithm \cite{bvictor} CPUTime (s)}\\
& Median & (Max,Min) &Median & (Max,Min) \\\hline
$S(5)$&  2.88 &(0.56, 120.56) & 9.27 & (1.80,16.98)\\
$S(7)$& 7.92 &(1.09, 12.19)&  17.17 & (6.40,31.95)\\
$S(10)$& 48.80& (18.98, 97.80) &  129.97 & (46.36,287.36)\\
$S(12)$& 87.23 &(52.83, 17.17)&  160.96 & (91.10,307.47)\\
$S(15)$& 257.06& (115.09, 513.84)&  357.37 & (169.45,850.22)\\
$S(17)$& 883.49& (229.15, 1721.98)& 2390.11 & (497.18,5287.36)\\
\hline
\end{tabular}
\captionsetup{justification=centering}
\centering\caption{Computational results of both algorithms.\label{compute}}
\end{table}
\end{center}
We can observe from Table \ref{compute} the differences between the proposed modified version of the method and the algorithm that is proposed in \cite{bvictor}. The improvements proposed in this paper clearly outperform the previous methodology proposed for solving the $\omega$-primality problem on the randomly generated instances. The main reason is that the new approach avoids many iterations for  each problem \eqref{psj} by using the knowledge about the initial bounds computed for the problem. Hence, the number of efficient solutions explored during the execution of the algorithm is much smaller than those generated in \cite{bvictor}. 
\section{Conclusion}\label{conclusion}

In this work, present an improved resolution method for an interesting problem in the Theory of Numbers, which is attracting the researchers in the last years,  the $\omega$-invariant of a numerical semigroup. In order to avoid the complete enumeration of a large discrete feasible set, a mathematical programming approach is proposed in \cite{bvictor}. There, the problem was formulated as a problem of optimizing a linear function over the set of efficient solutions of a multiobjective integer linear problem. However, a simple algorithm was proposed, which does not fully exploit the nature of the problem. Here, we apply a similar strategy than that proposed in \cite{bvictor} in which an easy-to-handle lower and upper bound computation of the problem allows to prune the search of the optimal solution of the problem. It allows to avoid many of the steps (related with solving integer programming problems), and then, is reflected on the CPU times needed to solve the problem.

Many extensions are possible under the topic of Mathematical Optimization tools applied to Theory of Numbers, in particular in Numerical Semigroups or in Affine Semigroups. In particular, the formulation and resolution of other algebraic indices, as the catenary degree or the tame degree \cite{blanco} will be the topic of forthcoming papers.

\section*{Acknowledgements}

The third author was partially supported by the projects MTM2016-74983-C2-1-R (MINECO, Spain), PP2016-PIP06 (Universidad
de Granada) and the research group SEJ-534 (Junta de Andaluc\'ia).

\end{document}